\UseAllTwocells \xyoption{frame} \CompileMatrices
\newtheorem{prop}{Proposition}
\newtheorem{rem}[prop]{Remark}
\newtheorem{theorem}{Theorem}[section]
\newtheorem{lemma}[theorem]{Lemma}
\newtheorem{example}[theorem]{Example}
\newtheorem{proposition}[theorem]{Proposition}
\newtheorem{definition}[theorem]{Definition}
\newtheorem{conjecture}[theorem]{Conjecture}
\theoremstyle{remark}
\theoremstyle{remark}
\newtheorem{remark}[subsection]{Remark}
\numberwithin{equation}{section}
\newcommand{\sL}{\mathcal{L}}
\newcommand{\sH}{\mathcal{H}}
\newcommand{\cO}{\mathcal{O}}
\newcommand{\bU}{\mathbb{U}}
\def\b1{{\mathbf 1}}
\begin{document}

\title{Simple Grassmannian flops}
\author{Jiun-Cheng Chen}
\address{Department of Mathematics\\ Third General Building\\ National Tsing-Hua University\\ No. 101 Sec. 2 Kuang Fu Road\\ Hsinchu, 30043\\ Taiwan}
\email{jcchen@math.nthu.edu.tw}

\author[Hsian-Hua Tseng]{Hsian-Hua Tseng}
\address{Department of Mathematics\\ Ohio State University\\ 100 Math Tower, 231 West 18th Ave.\\ Columbus, OH 43210\\ USA}
\email{hhtseng@math.ohio-state.edu}
\date{\today}

\begin{abstract}
We introduce a class of flops between projective varieties modeled on direct sums of universal subbundles of Grassmannians. We study basic properties of these flops.
\end{abstract}

\maketitle

\setcounter{tocdepth}{1}
\tableofcontents

\setcounter{section}{-1}

\section{Introduction}
We work over $\mathbb{C}$.

\subsection{Background}
A striking relationship between (classical) birational geometry and (modern) enumerative geometry is the conjectural invariance of Gromov-Witten theory when the target variety undergoes crepant birational transformations (also known as K-equivalence). Pioneered by \cite{R} and \cite{W}, this is now called {\em crepant transformation conjecture}. 

There are two main approaches to this conjecture. One approach takes advantage of knowledge about {\em descendant} Gromov-Witten theory, see \cite{CIT} and \cite{I} for further descriptions. This approach has yielded a complete solution to the crepant transformation conjecture in the toric case \cite{CIJ}, \cite{CI}.

The other approach focuses on primary/ancestor Gromov-Witten theory and is most successful to date in the case of {\em ordinary $\mathbb{P}^r$-flops}, see \cite{LLW}, \cite{ILLW}, \cite{LLW1}, \cite{LLW2}, \cite{LLQW}.

\subsection{This paper}
The purpose of this paper is to initiate the study of another class of explicit flops, in particular crepant transformation conjecture for this kind of flops.

We fix two $n$-dimensional vector spaces $V$ and $W$. Let $r< n$ be a natural number.

The central object of this paper, a {\em simple Grassmannian flop} which we introduce in Definition \ref{def:sGr_flop} below,  
\begin{equation}\label{eqn:sGr_flop}
\xymatrix{
f: X\ar@{-->}[r] & X'
}    
\end{equation}
is a global version of ``Grassmannian flops'' considered in \cite{DS}. We choose the term ``simple Grassmannian flops'' because of its clear analogy with simple $\mathbb{P}^r$-flops\footnote{More generally, we can consider the situation where Grassmannians in the construction are replaced by Grassmannian bundles over a smooth projective base, see \cite{PSW} for the local case. We refer to such a general situation as an ``ordinary Grassmannian flops'', in analogy with \cite{LLW1}, \cite{LLW2}, \cite{LLQW}.} studied in \cite{LLW}. 

Our motivating example, the {\em projective local model} of simple Grassmannian flops, is described in detail in Example \ref{example:local}.

By Lemma \ref{lem:semismall} below, simple Grassmannian flops arise from {\em semismall} contractions. In view of this, the crepant transformation correspondence (also known as invariance of Gromov-Witten theory) for simple Grassmannian flops (\ref{eqn:sGr_flop}) can be formulated as the following
\begin{conjecture}\label{conj:ctc_gr_flop}
The isomorphism in Proposition \ref{prop:coh_isom} below induces equalities between generating functions of ancestor Gromov-Witten invariants of $X$ and $X'$ after analytical continuation.     
\end{conjecture}

The main results of this paper are 
\begin{enumerate}

\item Equivalence of derived categories under simple Grassmannian flops, see Proposition \ref{prop:d-eqiuv}.

\item For projective local model of simple Grassmannian flops $\bar{X}_-\dashrightarrow \bar{X}_+$ as in Example \ref{example:local}, the induced map $(\bar{f}_+)_*\bar{f}_-^*: H^*(\bar{X}_-)\to H^*(\bar{X}_+)$ on cohomology groups give an isomorphsim between quantum cohomology rings and identifies Gromov-Witten generating functions in all genera. The result on quantum cohomology rings is given in Section \ref{sec:genus0}, the result on generating functions is given in Section \ref{sec:higher_genus}.   

In other words, we prove crepant transformation correspondence (Conjecture \ref{conj:ctc_gr_flop}) for projective local model of simple Grassmannian flops.
\end{enumerate}

The rest of this paper is organized as follows. 
Some basic properties of simple Grassmannian flops are established in Section \ref{sec:foundation}.  Our effort to study crepant transformation correspondence for simple Grassmannian flops begins with Section \ref{sec:GW}, where we treat the case of projective local models. The genus $0$ case is studied in Section \ref{sec:genus0}, and higher genus case in Section \ref{sec:higher_genus}.

\subsection{Acknowledgment}
We are very grateful to W. Donovan for corrections and comments. We thank D. Anderson for help with Lemma \ref{lem:vanishing}. H.-H. T. is supported in part by Simons foundation collaboration grant.

\section{Foundations}\label{sec:foundation}
\subsection{Local models and general definition}\label{sec:basic_property}
\begin{example}[Local models]\label{example:local} 
We fix two $n$-dimensional vector spaces $V$ and $W$. Let $r< n$ be a natural number. Denote by $S_+\to Gr(r, V)$ and $S_-\to Gr(r, W^\vee)$ the universal sub-bundles. Consider the Grassmannian flop in \cite{DS},
\begin{equation}\label{eqn:Gr_flop_local}
\xymatrix{
X_-\ar@{-->}[r] & X_+,
}    
\end{equation}
where $X_-$ is isomorphic to the {total space of $S_-\otimes V$ over $Gr(r,W^\vee)$}, $X_+$ is isomorphic to the {total space of $S_+\otimes W^\vee$ over $Gr(r,V)$}. The birational map (\ref{eqn:Gr_flop_local}) is obtained by a variation of GIT quotients, see \cite{DS} for more details. 

(\ref{eqn:Gr_flop_local}) is the flop for the morphism $$\phi_-: X_-\to X_0$$ that contracts the zero section of $S_-\otimes V$ to a point. Note the morphism $\phi_-: X_-\to X_0$ also contracts other loci. We describe the varieties $X_\pm$, $X_0$ and the morphisms $\phi_\pm : X_\pm \to X_0$. 
Denote by $$\mathsf{LinMap}^{\text{rank} \leq r}$$ the space of all linear maps from $W$ to $V$ with ranks at most $r$. We have 
\begin{equation*}
\begin{split}
&X_- = \{([W_1], A) | A \in \mathsf{LinMap}^{\text{rank} \leq r} (W,V), [W_1] \in Gr (W,r), A: W \to V \;\text{factors as} \; A: W \twoheadrightarrow W_1 \to V\},\\
&X_+ = \{([V_1], A)| A \in \mathsf{LinMap}^{\text{rank} \leq r} (W,V), [V_1] \in Gr (r,V),  \text{Im}(A) \subset V_1\},\\
&X_0=\{A| A \in \mathsf{LinMap}^{\text{rank} \leq r} (W,V)\}. 
\end{split}
\end{equation*} 
Note that we use the natural identification between the universal sub-bundle $S_ -$ over $Gr(r,W^\vee)$ and the dual of the universal quotient bundle $Q$ over $Gr(W,r)$, the Grassmannian parametrizing rank $r$ {\em quotients} of $W$.
The morphisms $\phi_\pm$ are just the morphisms which forget the first factors.  

Set $\tilde{X}:=X_-\times_{\phi_-,X_0,\phi_+}X_+$. These maps fit into a commutative diagram:  \begin{equation}\label{diagram:Gr_flop_local}
\xymatrix{
 & \tilde{X}\ar[dl]_{f_-}\ar[dr]^{f_+} & \\
X_-\ar[dr]_{\phi_-}\ar@{-->}[rr] & \, & X_+\ar[dl]^{\phi_+}\\
\, & X_0. & \, 
}    
\end{equation}
The morphisms $\phi_\pm$ are projective, but 
the varieties $X_0$ and
$X_\pm$ are not projective. 
One natural candidate for compactifying $X_-$ (resp. $X_+$) is $\mathbb{P}(X_-\oplus \mathcal{O}_{Gr(r, W^\vee)})$ (resp. $\mathbb{P}(X_+\oplus \mathcal{O}_{Gr(r,V)})$). We write 
\begin{equation*}
\bar{X}_-=\mathbb{P}(X_-\oplus \mathcal{O}_{Gr(r, W^\vee)}), \quad \bar{X}_+=\mathbb{P}(X_+\oplus \mathcal{O}_{Gr(r,V)}).
\end{equation*}


Fix bases of $V$ and $W$. A linear map $W \to V$ can be viewed as an $n \times n$ matrix. Then $X_0$ can be described as the space of $n \times n$ matrices with ranks at most $r$. 
Denote the $(i,j)$-th entry of the matrix by $x_{i,j}$. Then we have
\begin{equation*}
X_0= \{(x_{i,j})_{1 \leq i,j \leq n} \in \mathbb{C}^{n^2}| \text{all $(r+1) \times (r+1)$ minors have determinants $0$}\}.
\end{equation*}
We introduce homogeneous coordinates $(X_{i,j})_{1 \leq i \leq n, 1 \leq j \leq n}$ and $T$. Consider the homogeneous coordinate ring $$\mathbb{C}[X_{i,j},T]_{1 \leq i, j \leq n}/I$$ where $I$ is the ideal generated by determinants of all possible $(r+1) \times (r+1)$ minors. The Proj of this graded ring is a projective variety. We denote this variety by $\bar{X}_0$. It contains the open set $X_0$ defined by $\{T \neq 0 \}$. The morphisms $\phi_\pm: X_\pm \to X_0$ extend to $\bar{\phi}_\pm:\bar{X}_\pm \to \bar{X}_0$. Let $$\bar{X}_{0;\;i,j}:= \bar{X}_0 \cap \{X_{i,j} \neq 0,\, X_{i,j} \neq T\}.$$ It is easy to see that $$\bigcup _{1 \leq i,j \leq n} \bar{X}_{0;\;i,j}$$ covers the infinity divisor. Also note that $X_0 \setminus \{0\}$ can be covered by $\bigcup _{1 \leq i,j \leq n} V_{0;\;i,j}$, where $$ V_{0;\;i,j}:= \bar{X}_0 \cap \{X_{i,j} \neq 0,\, T \neq 0\}.$$ 
Set 
\begin{equation*}
\begin{split}
&\bar{X}_{-;\;i,j}:= \bar{\phi}^{-1}_-(\bar{X}_0 \cap \{X_{i,j} \neq 0,\, X_{i,j} \neq T\}),\\
&\bar{X}_{+;\;i,j}:= \bar{\phi}^{-1}_+(\bar{X}_0 \cap \{X_{i,j} \neq 0,\, X_{i,j} \neq T\}),\\
&V_{-;\;i,j}:= \bar{\phi}^{-1}_-( \bar{X}_0 \cap \{X_{i,j} \neq 0,\, T \neq 0\}),\\
&V_{+;\;i,j}:= \bar{\phi}^{-1}_+( \bar{X}_0 \cap \{X_{i,j} \neq 0,\, T \neq 0\}).
\end{split}    
\end{equation*}
For each $1 \leq i,  j \leq n$, there is an isomorphism between $\bar{X}_{0;\;i,j}$ and $V_{0;\;i,j}$. For each $1 \leq i,j \leq n$, we have  isomorphisms
\begin{equation}\label{eqn:Gr_flop_projective_ij-}
\xymatrix{
V_{-;\;i,j} \ar[d]_{\bar{\phi}_{-}|_{V_{-;\;i,j}}}\ar[r]^{\simeq} &\bar{X}_{-;\;i,j} \ar[d]^{\bar{\phi}_{-}|_{\bar{X}_{-;\;i,j}}} \\
V_{0;\;i,j} \ar[r]^{\simeq}                      &       \bar{X}_{0;\;i,j}, 
}
\end {equation}
\begin{equation}\label{eqn:Gr_flop_projective_ij+}
\xymatrix{
V_{+;\;i,j} \ar[d]_{\bar{\phi}_{+}|_{V_{+;\;i,j}}}\ar[r]^{\simeq} &\bar{X}_{+;\;i,j} \ar[d]^{\bar{\phi}_{+}|_{\bar{X}_{+;\;i,j}}} \\
V_{0;\;i,j} \ar[r]^{\simeq}                      &       \bar{X}_{0;\;i,j}. 
}
\end {equation}

The flop (\ref{eqn:Gr_flop_local}) extends to projective compactifications of $X_\pm$ over $\bar{X}_0$,
\begin{equation}\label{eqn:Gr_flop_projlocal}
\xymatrix{
\bar{X}_-:=\mathbb{P}(X_-\oplus \mathcal{O}_{Gr(r, W^\vee)})\ar@{-->}[r]_{f} & \mathbb{P}(X_+\oplus \mathcal{O}_{Gr(r,V)})=: \bar{X}_+.
}
\end{equation}
The contractions, $\bar{\phi}_-:\bar{X}_-\to \bar{X}_0\leftarrow \bar{X}_+ :\bar{\phi}_+$, and the fiber product $\tilde{\tilde{X}}:=\bar{X}_-\times_{\bar{\phi}_-,\bar{X}, \bar{\phi}_+}\bar{X}_+$ fit into a commutative diagram
\begin{equation}\label{diagram:Gr_flop_projlocal}
\xymatrix{
 & \tilde{\tilde{X}}\ar[dl]_{\bar{f}_-}\ar[dr]^{\bar{f}_+} & \\
\bar{X}_-\ar[dr]_{\bar{\phi}_-}\ar@{-->}[rr] & \, & \bar{X}_+\ar[dl]^{\bar{\phi}_+}\\
\, & \bar{X}_0. & \, 
}    
\end{equation}
(\ref{diagram:Gr_flop_projlocal}) restricts to (\ref{diagram:Gr_flop_local}).
\end{example}

Now we formulate the main definition of this paper.

\begin{definition}\label{def:sGr_flop}
A {\em simple Grassmannian flop} is a birational map $\psi $ in the following diagram
\begin{equation}\label{eqn:sGr_flopneedtochange}
\xymatrix{
X\ar[dr]_{\phi_-}\ar@{-->}[rr]^{\psi} & \, & X'\ar[dl]^{\phi_+}\\
\, & Y, & \, }    
\end{equation}
such that
\begin{enumerate}
\item $X$ and $X'$ are smooth projective varieties, $\phi_-$ and $\phi_+$ are projective morphisms;
\item $X$ contains a Zariski open subset $U_1$ which is isomorphic to a Zariski open set of $\bar{X}_{-}$ and $X'$ contains a Zariski open subset $V_1$ of $\bar{X}_{+}$ as a Zariski open set such that 
 $\psi|_{U_1}: U_1 \dashrightarrow  V_1$ is isomorphic to the restriction of the flop in Example \ref{example:local};
\item there are Zariski open subsets $U_2 \subset X$ and $V_2 \subset X'$ such that $X= U_1 \cup U_2$, $X'= V_1 \cup V_2$ and $\psi|_{U_2}: U_2 \simeq V_2$ is an isomorphism.
\end{enumerate}
\end{definition}

\subsection{Comparison with $\mathbb{P}^r$-flops}
It is known that simple $\mathbb{P}^r$-flops arise from semismall contractions. The following shows that the same is true for simple Grassmannian flops.

\begin{lemma}\label{lem:semismall}
Notations as in Example \ref{example:local}. The contraction $\bar{{\phi}}_{-}: \bar{X}_-\to \bar{X}_0$ is semismall (in the sense of intersection homology). Hence a simple Grassmannian flop $\psi$ in Definition \ref{def:sGr_flop} arises from a semismall contraction.
\end{lemma}
\begin{proof}
Following (\ref{eqn:Gr_flop_projective_ij-}), it suffices to check over $X_0$. Let $i \in \mathbb{N}$ and $X_{0}^{i} := \{x \in X_0|\,\text{dim}\, {\phi}_{-}^{-1}(x) = i \}$. These sets are locally closed subsets in $X_0$ and it makes sense to consider their dimensions. As noted in Example \ref{example:local}, $X_0$ can be identified as $\mathsf{LinMap}(W,V)^{\text{rank} \leq r}$.
Let $x=([W'], A: W \to V) \in X_{-}$, where $ W' $ is quotient vector space of $W$ of  dimension $r$ and $A:W \to V$ is a linear map with $A: W \to V$ factors as $A: W \twoheadrightarrow W' \to V$. The morphism ${\phi}_-:X_{-} \to X_0$ sends $x=([W'], A: W \twoheadrightarrow W' \to V)$ to $A: W  \twoheadrightarrow W' \to V$. Suppose that the image $A(W) \subset V$ is $k$-dimensional and denote the kernel of $A: W \to V$  by $\text{ker}(A)$, which has dimension $n-k$. The fiber over $\phi (x)$ can be identified as $Gr(\text{ker}(A), r-\text{rank} A) \simeq Gr(r-k,n-k)$, which has dimension $(r-k)(n-r)$. Therefore, the only possible $i$ such that $X_0^i$ is nonempty and has positive dimensional fiber are $i=(r-k)(n-r)$ for $k=0,1,2, \cdots, k-1.$ The dimension of $X_0^{(r-k)(n-r)}$ is $nk+ k(n-k)$. 
We have 
$$\text{dim}(X_0^{(r-k)(n-r)}) +2\times(\text{dim of the fiber})=nk+k(n-k)+2(r-k)(n-r).$$  
This is smaller than or equal to $\text{dim} X= nr+r(n-r)$ if and only if $-k^2-r^2+2rk \leq 0.$  The last inequality follows as $0 \leq k \leq r-1$. 
\end{proof}
\begin{rem}
\hfill
\begin{enumerate}
    \item The contraction $\bar{{\phi}}_{+}: \bar{X}_+\to \bar{X}_0$ is semismall by similar arguments. 
    \item Essentially the same proof was also communicated to us by W. Donovan.
\end{enumerate}
\end{rem}

Simple $\mathbb{P}^r$-flops are examples of simple $K$-equivalences in the sense of \cite{K}. The following Lemma indicates that simple Grassmannian flops and simple $\mathbb{P}^r$ flops differ in this regard.

\begin{lemma}
A simple Grassmannian flop (\ref{eqn:sGr_flop}) is {\em not} a simple K-equivalence in the sense of \cite{K}.    
\end{lemma}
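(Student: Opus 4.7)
The plan is to invoke the following structural characterization of a simple K-equivalence in the sense of \cite{K}: such a K-equivalence $f:X\dashrightarrow X'$ must admit a smooth common resolution $Y$ that is simultaneously the blow-up of $X$ along $Z$ and of $X'$ along $Z'$. Equivalently, $Y \cong \mathrm{Bl}_Z(X) \cong \mathrm{Bl}_{Z'}(X')$, with the common exceptional divisor $E$ carrying two compatible projective bundle structures $E \cong \mathbb{P}(N_{Z/X}) \to Z$ and $E \cong \mathbb{P}(N_{Z'/X'}) \to Z'$. First I would recall this characterization from \cite{K}, then exhibit an obstruction in the projective local model.

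By Proposition \ref{prop:existence}, after restricting to an analytic neighborhood of $Z$, the flop coincides with the local Grassmannian flop of \cite{DS}, modelled on $X = \mathrm{Tot}(S\otimes W^\vee)$ and $X' = \mathrm{Tot}(S'\otimes V)$. Setting $Y := \mathrm{Bl}_Z(X)$, the exceptional divisor is $E = \mathbb{P}(S\otimes W^\vee)$, whose points over $[V_r]\in Z$ are lines $[\phi]$ in $V_r\otimes W^\vee \cong \mathrm{Hom}(W,V_r)$. The induced rational map $\psi:Y\dashrightarrow X'$ sends $([V_r],[\phi])$ with $\phi$ of maximal rank $r$ to the point $[(\ker\phi)^\perp]\in Gr(r,W^\vee)=Z'$ together with the appropriate normal-bundle coordinate.

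The crucial step is to verify that $\psi$ does not extend to a morphism across the rank-deficient stratum of $E$, which is a proper nonempty subvariety whenever $r\geq 2$. Given $\phi_0$ of rank $s<r$, the plan is to choose two rank-$r$ perturbations $\phi_0+t\phi_1$ and $\phi_0+t\phi_2$ (with $t\to 0$) such that $\lim_{t\to 0}(\ker(\phi_0+t\phi_i))^\perp\in Gr(r,W^\vee)$ depends on $i$; an explicit matrix computation already in the case $r=2$, selecting $\phi_1,\phi_2$ so that the limiting $r$-planes include different additional directions, makes this transparent. It follows that $\psi$ is not a morphism on all of $Y$, so $Y$ cannot equal $\mathrm{Bl}_{Z'}(X')$, and $E$ cannot carry the required projective bundle structure over $Z'$, contradicting the simple K-equivalence property. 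I expect the main obstacle to be pinning down the precise formulation of \cite{K} being invoked and organizing the rank stratification cleanly; once this is in place, non-uniqueness of the limits is essentially immediate. (The argument requires $r\geq 2$; the $r=1$ case reduces to a simple $\mathbb{P}^{n-1}$ flop, for which every nonzero $\phi$ has full rank and the difficulty disappears.)
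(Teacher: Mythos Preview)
Your approach is correct and takes a genuinely different route from the paper. The paper argues via a dimension count and classification: since $\mathrm{rank}\,N_{Z/X}=rn>r(n-r)-2=\dim Z-\dim\bar\phi(Z)-2$, a putative simple K-equivalence of this shape would fall under \cite[Theorem~0.3\,(1)]{K} and hence be one of the explicit families in \cite[Section~5]{K}; one then checks by inspection that none of those matches Definition~\ref{def:sGr_flop}. Your argument is instead a direct geometric obstruction: taking $Y=\mathrm{Bl}_Z(X)$ with exceptional divisor $E=\mathbb{P}(S\otimes W^\vee)$, you show that the induced rational map $Y\dashrightarrow X'$ cannot extend across the rank-deficient stratum of $E$, by producing two one-parameter families through a rank-$s$ point $[\phi_0]$ whose limits in $Z'=Gr(r,W^\vee)$ differ. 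This is more self-contained---it does not invoke the Mukai-pair classification---and makes the failure explicit; the paper's proof is shorter but outsources the work to \cite{K}. Your remark that the argument needs $r\geq 2$ is on point and worth keeping: for $r=1$ the flop is literally a simple $\mathbb{P}^{n-1}$ flop, which \emph{is} a simple K-equivalence, so the lemma (and the paper's proof) tacitly assume $r\geq 2$.
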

\begin{proof}
For a simple Grassmannian flop, let $Z$ be the zero section of $S_-\otimes V$. Then we have $\text{dim}\,Z=\text{dim}\, Gr(r,n)=r(n-r)$. The locus $Z$ is contracted to a point, so $\text{dim}\,\bar{\phi}(Z)=0$. The rank of the normal bundle $N_{Z/X}$ is $rn$. Since $rn> r(n-r)-0-2$, if a simple Grassmannian flop were a simple K-equivalence, it fits the case \cite[Theorem 0.3 (1)]{K}. Then \cite[Theorem 0.3]{K} shows that a simple Grassmannian flop must be one of the examples constructed in \cite[Section 5]{K}. But it is clear from the descriptions in \cite[Section 5]{K} that none of those examples is a simple Grassmannian flop as in (\ref{eqn:sGr_flop}).  
\end{proof}

\subsection{Derived equivalence}
We now present one of the main results of this paper.

\begin{proposition}\label{prop:d-eqiuv}
The fibered product $\tilde{X}$ induces an equivalence of derived categories
\begin{equation*}
 R(f_{-})_{*}Lf_{+}^{*}: D^b(X')\to D^b(X).   
\end{equation*}
\end{proposition}
\begin{proof}

We first prove the compactified normal bundle case.  
\begin{equation}\label{diagram:Gr_flop_projlocal_derivedequivalencespecialcase}
\xymatrix{
\bar{X}_+\ar[dr]_{\bar{\phi}_+}\ar@{-->}[rr] & \, & \bar{X}_-\ar[dl]^{\bar{\phi}_-}\\
\, & \bar{X}_0. & \, 
}    
\end{equation}

The functor
$R(f_{-})_{*} Lf^*_{+}:D^b(X_{+}) \to D^b(X_-)$ is an equivalence by \cite[Theorems 3.2.21, 5.2.1]{BCFMV}.
The restriction of this equivalence to $D^b (V_{+;\;i,j}) \to D^b (V_{-;\;i,j})$ is also an equivalence for each $i,j$. Following (\ref{eqn:Gr_flop_projective_ij-}) and (\ref{eqn:Gr_flop_projective_ij+}), 
$ R(\bar{f}_{-;\;i,j})_{*} L\bar{f}^{*}_{+;\;i,j}: D^b(\bar{X}_{+;\;i,j}) \to D^b(\bar{X}_{-;\;i,j})$ is also an equivalence for each $i,j$. Since $\bar{X}_\pm$ are covered by open sets $X_\pm$ and $\bar{X}_{\pm;\;i,j}.$ We have  $R(\bar{f}_{-})_{*} L\bar{f}^*_+: D^b(\bar{X}_+)\to D^b(\bar{X}_-)$ is an equivalence by \cite[Proposition 3.2]{C}.

We now prove the general case. Consider the diagram:
 \begin{equation}\label{diagram:Gr_flop_local}
\xymatrix{
 & \tilde{X}\ar[dl]_{f_-}\ar[dr]^{f_+} & \\
X \ar[dr]_{\phi_-}\ar@{-->}[rr] & \, & X'\ar[dl]^{\phi_+}\\
\, & Y, & \, 
}    
\end{equation}
where $\tilde{X}:= X \times _{\phi_{-}, Y, \phi_{+}} X' $ is  the fiber product. 
Note that our definition would imply $\bar{X}_+ \setminus Z \subset X' $ and $\bar{X}_- \setminus Z \subset X $ for some closed subvariety $Z \subset X, X'$. Denote this open set in $\bar{X}_+$  by $V_1$ (resp. $\bar{X}_-$ by $U_1$). Since $U_1$ and $V_1$ can be considered as open sets in $\bar{X}_-$ and $\bar{X}_+$, the equivalence for the compactified normal bundle case implies that $ R(f_{-})_{*}Lf_{+}^{*}$ yields an equivalence when restricted to $D^b(V_1)\to D^b(U_1)$. It is clear that the restriction to $D^b(V_2) \to D^b(U_2)$ is an equivalence from the definition. Again, the desired result follows from \cite[Proposition 3.2]{C}. \end{proof}

\begin{proposition}\label{prop:coh_isom}
The fiber product $\tilde{X}:=X\times_{\bar{\phi}, \bar{X}, \bar{\phi}'} X'$ induces an isomorphism of cohomology groups:
\begin{equation*}
(f_{-})_{*}f_{+}^*: H^*(X')\to H^*(X),    
\end{equation*}
where $f_-:\tilde{X}\to X$ and $f_+:\tilde{X}\to X'$ are projection maps.
\end{proposition}
\begin{proof}
This follows from the equivalence of derived categories induced by the fiber product, see \cite[Proposition 5.33]{H}.
\end{proof}

\begin{rem}
Proposition \ref{prop:d-eqiuv} also implies that induced map between K-groups (and rational Chow groups) is an isomorphism:
\begin{equation*}
(f_{-})_{*}f_{+}^*: K(X')\overset{\sim}{\longrightarrow} K(X), \quad (f_{-})_{*}f_{+}^*: \text{CH}(X')_{\mathbb{Q}}\overset{\sim}{\longrightarrow} \text{CH}(X)_{\mathbb{Q}},
\end{equation*}
see e.g. \cite[Remark 5.25]{H} for discussions.
\end{rem}

\subsection{Discussion on embedding of normal bundles}
Our Definition \ref{def:sGr_flop} for for Grassmannian flops requires $X$ to contain an open subset of the projectivization of the vector bundle $S_{+} \otimes W^{\vee}$ over $Gr(r,V)$. This is seemingly more than what is required for simple $\mathbb{P}^r$-flops, which only require the flopping loci to be isomorphic to $\mathbb{P}^r$ with normal bundle a direct sum of copies of $\mathcal{O}_{\mathbb{P}^r}(-1)$.

In attempt to better understand the geometry of simple Grassmannian flops as in Definition \ref{def:sGr_flop}, we examine what happens if we only assume the (weaker) condition 
\begin{equation}\label{eqn:weaker_condition}
Gr(r,V)\simeq Z\subset X  \text{ with normal bundle } N_{Z/X}\simeq S_{+} \otimes W^{\vee}.
\end{equation}
We can prove the following
\begin{proposition}\label{prop:formal_nbd}
Suppose (\ref{eqn:weaker_condition}) holds. Then $X$ contains an analytic neighborhood of $Gr(r,V)\simeq Z\subset X$ which is isomorphic to an analytic open neighborhood of the zero section of the normal bundle. 
\end{proposition}

\begin{proof}
By the formal principle \cite{G}, \cite{HR}, if we can show that the formal neighborhood of $Z\subset X$ is isomorphic to the formal neighborhood of the zero section in $N_{Z/X}$, then there exists open neighborhoods of $Z\subset X$ and of zero section $Z\subset N_{Z/X}$ in $N_{Z/X}$ which are biholomorphic. Certainly, $Z\subset X$ and $Z\subset N_{Z/X}$ have isomorphic zero-th order neighborhoods, i.e. $Z$. By \cite[Proposition 1.5]{ABT} and \cite[Proposition 1.4 (i)]{ABT}, $Z\subset X$ and $Z\subset N_{Z/X}$ have isomorphic first order neighborhoods if $H^1(Z, T_Z\otimes N^\vee_{Z/X})=0$.

For $k\geq 1$. Suppose that $Z\subset X$ and $Z\subset N_{Z/X}$ have isomorphic $k$-th order neighborhoods. Then by \cite[Theorem 4.1, Proposition 2.2, Corollary 3.6]{ABT}, $Z\subset X$ and $Z\subset N_{Z/X}$ have isomorphic $(k+1)$-st order neighborhoods if $H^1(Z, T_Z\otimes \text{Sym}^{k+1}N^\vee_{Z/X})=0$ and $H^1(Z, N_{Z/X}\otimes \text{Sym}^{k+1}N^\vee_{Z/X})=0$.

By Lemma \ref{lem:vanishing} below, we conclude that $Z\subset X$ and $Z\subset N_{Z/X}$ have biholomorphic open neighborhoods.
\end{proof}

\begin{lemma}\label{lem:vanishing}
For $k\geq 1$, we have $H^1(Z, T_Z\otimes \text{Sym}^{k}N^\vee_{Z/X})=0$ and $H^1(Z, N_{Z/X}\otimes \text{Sym}^{k+1}N^\vee_{Z/X})=0$.    
\end{lemma}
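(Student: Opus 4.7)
The plan is to decompose each of $T_Z \otimes \text{Sym}^k N^\vee_{Z/X}$ and $N_{Z/X} \otimes \text{Sym}^{k+1} N^\vee_{Z/X}$ into irreducible homogeneous bundles on $Z = Gr(r,V)$ and then kill the first cohomology summand-by-summand via the Borel--Weil--Bott (BBW) theorem. Since $W$ is a fixed vector space (hence trivial on $Z$), the Cauchy identity
\[
\text{Sym}^k(S^\vee \otimes W) \;=\; \bigoplus_{|\lambda|=k,\,\ell(\lambda)\le r} \mathbb{S}^\lambda S^\vee \otimes \mathbb{S}^\lambda W
\]
has trivial multiplicity spaces $\mathbb{S}^\lambda W$, so cohomology is controlled by the tensors $T_Z \otimes \mathbb{S}^\lambda S^\vee$ and $N_{Z/X} \otimes \mathbb{S}^\lambda S^\vee$. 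Writing $T_Z = S^\vee \otimes Q$ and applying Pieri's rule $S^\vee \otimes \mathbb{S}^\lambda S^\vee = \bigoplus_\mu \mathbb{S}^\mu S^\vee$ (sum over partitions $\mu$ obtained from $\lambda$ by adding one box), the first vanishing is reduced to showing $H^1(Z, \mathbb{S}^\mu S^\vee \otimes Q) = 0$ for every partition $\mu$ with $\ell(\mu) \le r$. For the second, I use the rank-$r$ identity $S \cong \wedge^{r-1} S^\vee \otimes (\det S^\vee)^{-1}$ together with the column Pieri rule to reduce to showing $H^1(Z, \mathbb{S}^\mu S^\vee \otimes (\det S^\vee)^{-1}) = 0$ for every such $\mu$.

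Both reduced vanishings follow from a direct BBW computation. The bundle $\mathbb{S}^\mu S^\vee \otimes Q$ has Levi-dominant weight $(\mu_1,\ldots,\mu_r\,|\,0,\ldots,0,-1)$, so
\[
\lambda + \rho = (\mu_1 + n-1,\,\ldots,\,\mu_r + n-r,\,n-r-1,\,n-r-2,\,\ldots,\,1,\,-1).
\]
The smallest ``S-entry'' $\mu_r + n - r$ is at least $n-r$, strictly greater than the largest ``Q-entry'' $n-r-1$, so the sequence is already strictly decreasing; Bott's theorem then concentrates cohomology in degree zero, giving $H^1 = 0$. For the second bundle, $\mathbb{S}^\mu S^\vee \otimes (\det S^\vee)^{-1}$ has weight $(\mu_1 - 1,\ldots,\mu_r - 1\,|\,0,\ldots,0)$ and
\[
\lambda + \rho = (\mu_1 + n-2,\,\ldots,\,\mu_r + n-r-1,\,n-r-1,\,n-r-2,\,\ldots,\,0),
\]
yielding a dichotomy: if $\mu_r = 0$, the $r$-th and $(r+1)$-st entries both equal $n-r-1$ and Bott's theorem forces all cohomology to vanish; if $\mu_r \ge 1$, the S-entries strictly exceed the Q-entries and cohomology again concentrates in degree zero. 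Either way $H^1 = 0$.

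The only real obstacle is standard BBW bookkeeping---in particular pinning down the correct Levi weight of $Q$ (whose nonzero entry is a $-1$ at position $n$, not a $1$ at position $r+1$) so that the Bott shift lands properly; once conventions are fixed (e.g., by verifying $H^0(\mathbb{P}^{n-1}, Q) = V$ in the $r = 1$ test case), the rest is a direct weight calculation. As a side remark, the argument never uses the hypothesis $k \ge 1$, so the vanishings in fact hold for all $k \ge 0$.
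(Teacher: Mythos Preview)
Your argument is correct. Both you and the paper reduce to Borel--Weil--Bott on $Gr(r,n)$, but the decompositions differ. The paper writes $N_{Z/X}^\vee\simeq (S^\vee)^{\oplus n}$, expands $\text{Sym}^l N_{Z/X}^\vee$ as a sum of products $\text{Sym}^{l_1}S^\vee\otimes\cdots\otimes\text{Sym}^{l_n}S^\vee$, and handles $T_Z$ via the tautological short exact sequence $0\to S\otimes S^\vee\to \mathcal{V}\otimes S^\vee\to T_Z\to 0$; the resulting bundles are not irreducible, so the paper invokes \cite{EF} to carry out the BBW step. You instead pass directly to irreducible summands via the Cauchy formula and Pieri rules, use $T_Z=S^\vee\otimes Q$ without the auxiliary exact sequence, and then run the Bott algorithm by hand on the two families $\mathbb{S}^\mu S^\vee\otimes Q$ and $\mathbb{S}^\mu S^\vee\otimes(\det S^\vee)^{-1}$. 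Your route is more explicit and self-contained, while the paper's is shorter at the cost of outsourcing the representation-theoretic bookkeeping to an external reference. Your observation that the hypothesis $k\ge 1$ is never used is also correct.

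One small remark on conventions: the placement of the $-1$ in the Levi weight of $Q$ depends on whether one works with the ``$V_\lambda$'' or ``$V_\lambda^\vee$'' form of BBW, and your stated test case $H^0(\mathbb{P}^{n-1},Q)=V$ would come out as $V^\vee$ in some of the competing normalizations. This does not affect your proof, since the degree in which cohomology is concentrated (governed by the length of the sorting permutation of $\lambda+\rho$, or the presence of a repeated entry) is insensitive to the global duality; but it is worth flagging if you later want to identify the nonvanishing $H^0$ groups as $GL(V)$-representations.
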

\begin{proof}
Since $N^\vee_{Z/X}\simeq S^\vee\otimes W\simeq (S^\vee)^{\otimes n}$, the symmetric product $\text{Sym}^l N^\vee_{Z/X}$ is a direct sum of vector bundles of the form
$$\text{Sym}^{l_1}S^\vee\otimes\text{Sym}^{l_2}S^\vee\otimes...\otimes\text{Sym}^{l_n}S^\vee, \quad l_1+...+l_n=l.$$
Therefore $N_{Z/X}\otimes \text{Sym}^{k+1}N^\vee_{Z/X}$ is a direct sum of vector bundles of the form
$$S\otimes\text{Sym}^{l_1}S^\vee\otimes\text{Sym}^{l_2}S^\vee\otimes...\otimes\text{Sym}^{l_n}S^\vee, \quad l_1+...+l_n=k+1.$$
Thus $H^1(Z, N_{Z/X}\otimes \text{Sym}^{k+1}N^\vee_{Z/X})=0$ follows from 
\begin{equation}\label{eqn:vanishing1}
H^1(Z,S\otimes\text{Sym}^{l_1}S^\vee\otimes\text{Sym}^{l_2}S^\vee\otimes...\otimes\text{Sym}^{l_n}S^\vee)=0.
\end{equation}

The universal sequence on $Gr(r,n)$, $$0\to S\to \mathcal{V}\to Q\to 0,$$
where $\mathcal{V}$ is the trivial bundle of rank $n$, gives the following short exact sequence
\begin{equation}
0\to S\otimes S^\vee\to \mathcal{V}\otimes S^\vee\to Q\otimes S^\vee\simeq T_{Gr(r,n)}\to 0.    
\end{equation}
The associated long exact sequence implies that $H^1(Z, T_Z\otimes \text{Sym}^{k}N^\vee_{Z/X})=0$ follows from $H^1(Z, S\otimes S^\vee\otimes \text{Sym}^kN^\vee_{Z/X})=0$ and $H^1(Z,\mathcal{V}\otimes S^\vee\otimes \text{Sym}^k N^\vee_{Z/X})=0$, which in turn follow from 
\begin{equation}\label{eqn:vanishing2}
\begin{split}
&H^1(Z, S\otimes S^\vee\otimes\text{Sym}^{l_1}S^\vee\otimes\text{Sym}^{l_2}S^\vee\otimes...\otimes\text{Sym}^{l_n}S^\vee)=0,\\
&H^1(Z, S^\vee\otimes \text{Sym}^{l_1}S^\vee\otimes\text{Sym}^{l_2}S^\vee\otimes...\otimes\text{Sym}^{l_n}S^\vee)=0.
\end{split}    
\end{equation}
It remains to show (\ref{eqn:vanishing1}) and (\ref{eqn:vanishing2}). These are consequences of Borel-Weil-Bott theorem. We use the results of \cite{EF}. Recall from \cite[Section 2]{EF}, $Z\simeq Gr(r,n)=U_{r,n}/GL_r$ where $U_{r,n}\subset \mathbb{C}^{nr}$ is the space of $n\times r$ complex matrices of rank $r$ and $GL_r$ acts by right multiplication. A $GL_r$ representation induces a vector bundle on $Gr(r,n)$. In particular, the defining representation of $GL_r$ gives the vector bundle $S^\vee$. By e.g. \cite[Section 15.5]{FH}, irreducible representations of $GL_r$ correspond to non-increasing sequences $\lambda_1\geq \lambda_2\geq...\geq \lambda_r$ of integers. 

The vector bundles 
\begin{equation*}
S^\vee\otimes\text{Sym}^{l_1}S^\vee\otimes\text{Sym}^{l_2}S^\vee\otimes...\otimes\text{Sym}^{l_n}S^\vee
\end{equation*}
correspond to direct sums of irreducible $GL_r$ representations indexed by integer sequences $\lambda_1\geq \lambda_2\geq...\geq \lambda_r$ with $\lambda_r\geq 0$. By \cite[Theorem 2.2 (a)]{EF}, we have 
\begin{equation*}
H^1(Z, S^\vee\otimes \text{Sym}^{l_1}S^\vee\otimes\text{Sym}^{l_2}S^\vee\otimes...\otimes\text{Sym}^{l_n}S^\vee)=0.
\end{equation*}
Since $S\simeq R\otimes D$ where $R$ is the vector bundle corresponding to the sequence $1,1,...,1,0$ and $D$ is the line bundle corresponding to the determinantal representation of $GL_r$ (which corresponds to the sequence $-1,...,-1$), the vector bundles 
\begin{equation*}
\begin{split}
&S\otimes\text{Sym}^{l_1}S^\vee\otimes\text{Sym}^{l_2}S^\vee\otimes...\otimes\text{Sym}^{l_n}S^\vee\\
&S\otimes S^\vee\otimes\text{Sym}^{l_1}S^\vee\otimes\text{Sym}^{l_2}S^\vee\otimes...\otimes\text{Sym}^{l_n}S^\vee
\end{split}
\end{equation*}
correspond to direct sums of irreducible $GL_r$ representations indexed by integer sequences $\lambda_1\geq \lambda_2\geq...\geq \lambda_r$ with $\lambda_r\geq -1$. If $\lambda_r\geq 0$, we have 
\begin{equation}\label{eqn:vanishing3}
\begin{split}
&H^1(Z,S\otimes\text{Sym}^{l_1}S^\vee\otimes\text{Sym}^{l_2}S^\vee\otimes...\otimes\text{Sym}^{l_n}S^\vee)=0\\
&H^1(Z,S\otimes S^\vee\otimes\text{Sym}^{l_1}S^\vee\otimes\text{Sym}^{l_2}S^\vee\otimes...\otimes\text{Sym}^{l_n}S^\vee)=0
\end{split}    
\end{equation}
as desired. If $\lambda_r=-1$, then examining the proof of \cite[Theorem 2.2 (b)]{EF}, the desired vanishing (\ref{eqn:vanishing3}) is still valid for all $r,n$ with $n\geq 1+r$ (which is our case).
\end{proof}

\section{Gromov-Witten theory of projective local models}\label{sec:GW}
In this Section we discuss crepant transformation correspondence for simple Grassmannian flops in the basic example of projective local models (\ref{eqn:Gr_flop_projlocal}). Our arguments build on results of \cite{LSW}, \cite{PSW}, \cite{IK}. We omit an introduction to Gromov-Witten theory and refer to these papers for setups and notations needed here.

\subsection{Genus zero}\label{sec:genus0}
We begin with the results in \cite{LSW}, \cite{PSW} on identifying genus $0$ Gromov-Witten theories of local models (\ref{eqn:Gr_flop_local}), formulated using Givental's symplectic space formalism: there is an explicitly constructed symplectic transformation 
\begin{equation}\label{eqn:U_open1}
\bU: \sH^{X_-}\to \sH^{X_+} 
\end{equation}
that identifies Givental's Lagrangian cone for $X_\pm$ (by identifying their big $I$-functions):
\begin{equation}\label{eqn:U_open2}
\bU(\sL^{X_-})=\sL^{X_+}, \quad \bU I^{X_-}=I^{X_+}.    
\end{equation}
Here, since $X_\pm$ are non-compact, we consider their Gromov-Witten theories equivariant with respect to natural torus actions given by the product of the $T=(\mathbb{C}^*)^n$-action on Grassmannian and $T$-action on $S^{\oplus n}$.

By genus $0$ quantum Riemann-Roch theorem \cite{CG}, (\ref{eqn:U_open1}) yields a symplectic transformation (we abuse notations here) 
\begin{equation}\label{eqn:U+O1}
\bU:\sH^{X_-\oplus \cO}\to \sH^{X_+\oplus \cO},
\end{equation}
such that
\begin{equation}\label{eqn:U+O2}
\bU(\sL^{X_-\oplus \cO})=\sL^{X_+\oplus \cO},\quad \bU I^{X_-\oplus \cO}=I^{X_+\oplus \cO}.
\end{equation}
Here $X_\pm\oplus \cO$ carry the $T\times T\times \mathbb{C}^*$-actions given by the $T\times T$-actions on $X_\pm$ and the fiber-scaling $\mathbb{C}^*$-action on $\cO$. Adding the extra $\cO$ corresponds to twisting Gromov-Witten theory of $X_\pm$ by $\cO$ and $e_{\mathbb{C}^*}^{-1}$. Since the Hodge bundle in genus $0$ is rank $0$, this mostly has no effects. Also, we may define $I$-functions $I^{X_\pm\oplus \cO}$ using the recipe in \cite{CCIT}.

We now proceed to study $\bar{X}_\pm=\mathbb{P}(X_\pm\oplus \cO)$ as in (\ref{eqn:Gr_flop_projlocal}). For this, we forget the $T$-action on Grassmannian and restrict the $T\times \mathbb{C}^*$-action on fibers of vector bundles $S^{\oplus n}\oplus \cO$ to the diagonal $\mathbb{C}^*\subset T\times \mathbb{C}^*$. By the results of \cite{IK}, (\ref{eqn:U+O1}) yields a symplectic transformation 
\begin{equation}\label{eqn:U_projlocal1}
\bar{\bU}:\sH^{\bar{X}_-}\to \sH^{\bar{X}_+},
\end{equation}
such that
\begin{equation}\label{eqn:U_projlocal2}
\bar{\bU}(\sL^{\bar{X}_-})=\sL^{\bar{X}_+},\quad \bar{\bU} I^{\bar{X}_-}=I^{\bar{X}_+}.
\end{equation}
Here, the $I$-functions $I^{\bar{X}_\pm}$ can be constructed from $I^{X_\pm\oplus \cO}$ using \cite[Theorem 3.3]{IK}. The symplectic transformation $\bar{\bU}$ is defined so that the following holds
\begin{equation}\label{eqn:U_and_barU}
\bar{\bU}(\hat{J})=\widehat{\bU(J)}.    
\end{equation}
Here the Fourier transform $\widehat{(-)}$ defined in \cite[Section 4.2]{IK} is
\begin{equation}
J(\lambda)\mapsto \hat{J}(q)=\sum_{k\in \mathbb{Z}}\kappa(\mathcal{S}^{-k}J)q^k.    
\end{equation}
Therefore, $\bar{\bU}$ is explicitly given as follows. For $g(q)=\sum_{k\in \mathbb{Z}}g_kq^k$, we have
\begin{equation}
\bar{\bU}(g(q))=\sum_{k\in\mathbb{Z}} \bar{\bU}(g_k)q^k, \quad \bar{\bU}(g_k)=\kappa_+\mathcal{S}^{-k}\bU\mathcal{S}^k\kappa_-^{-1}(g_k).    
\end{equation}
Here $\kappa_\pm: H^*_{\mathbb{C}^*}(X_\pm\oplus \cO)\to H^*(\bar{X}_\pm)$ are the Kirwan maps (see \cite[Section 1.3]{IK}) and $\kappa_\pm^{-1}$ are defined to send $p=c_1(\cO(1))$ to the equivariant parameter $\lambda$. $\mathcal{S}$ is the shift operator \cite[Equation (1.1)]{IK}.

Since $\bU$ in (\ref{eqn:U+O1}) is symplectic, it follows from \cite[Proposition 4.4]{IK} and a direct calculation that $\bar{\bU}$ in (\ref{eqn:U_projlocal1}) is also symplectic. By Lemma \ref{lem:semismall} and \cite[Remark 3.19, Theorem 3.22]{I}, $\bar{\bU}$ preserves the opposite subspace,
\begin{equation}\label{eqn:opposite_subspace}
\bar{\bU}(\sH^{\bar{X}_-}_-)=\sH^{\bar{X}_+}_-.    
\end{equation}
Because of (\ref{eqn:opposite_subspace}), applying Birkhoff factorization to derivatives of the second equation in (\ref{eqn:U_projlocal2}), we obtain
\begin{equation}\label{eqn:U_projlocal3}
\bar{\bU}(S^{\bar{X}_-})^*=(S^{\bar{X}_+})^*,    
\end{equation}
where $S^{\bar{X}_\pm}$ is the generating function of genus $0$, $1$-point descendant invariants of $\bar{X}_\pm$. 

Now we calculate the induced isomorphism on cohomology
\begin{equation}\label{eqn:bbU_H}
\bar{\bU}_H: H^*(\bar{X}_-)\simeq z\sH^{\bar{X}_-}_-/\sH^{\bar{X}_-}_-\overset{\bar{\bU}}{\longrightarrow} z\sH^{\bar{X}_+}_-/\sH^{\bar{X}_+}_-\simeq H^*(\bar{X}_+).
\end{equation}
Consider the diagram
\begin{equation}\label{diagram:FM}
\xymatrix{
K_{eq}(X_-)\ar[r]^{FM}\ar[d]_{\Psi_-} & K_{eq}(X_+)\ar[d]^{\Psi_+}\\
\sH^{X_-}\ar[r]^{\bU}\ar[d] & \sH^{X_+}\ar[d]\\
\sH^{X_-\oplus \cO}\ar[r]^{\bU}\ar[d]_{\widehat{(-)}} & \sH^{X_+\oplus \cO}\ar[d]^{\widehat{(-)}}\\
\sH^{\bar{X}_-}\ar[r]^{\bar{\bU}} & \sH^{\bar{X}_+}.
}    
\end{equation}
Here the upper square is commutative because of \cite{LSW}, \cite{PSW}. The middle square is commutative by definition. The lower square is commutative because of \cite[Theorem 4.2]{IK}. 

Using the definition of Fourier transform $\widehat{(-)}$, we see that $\widehat{(-)}$ sends $\sH^{X_-}_-$ to $\sH^{X_+}_-$ and $\widehat{z^{>0}}$ contains $z^{>0}$ terms. It follows that 
\begin{equation}
\bU(\sH^{X_-}_-)=\sH^{X_+}_-.    
\end{equation}
Define
\begin{equation}\label{eqn:bU_H}
{\bU}_H: H_{eq}^*({X}_-)\simeq z\sH^{{X}_-}_-/\sH^{{X}_-}_-\overset{{\bU}}{\longrightarrow} z\sH^{{X}_+}_-/\sH^{{X}_+}_-\simeq H_{eq}^*({X}_+).
\end{equation}

The commutativity of the upper square in (\ref{diagram:FM}) is 
\begin{equation}\label{eqn:FM_compatible}
\bU\circ \Psi_-=\Psi_+\circ FM.    
\end{equation}
We examine (\ref{eqn:FM_compatible}) evaluated at $\alpha\in K_{eq}(X_-)$:
\begin{equation}\label{eqn:FM_compatible2}
\begin{split}
&\bU\left(z^{-\mu_-}z^{\rho_-}(\Gamma_{X_-}\cup (2\pi\sqrt{-1})^{\frac{\text{deg}_{X_-}}{2}}ch(\alpha)) \right)\\
=&z^{-\mu_+}z^{\rho_+}(\Gamma_{X_+}\cup (2\pi\sqrt{-1})^{\frac{\text{deg}_{X_+}}{2}}ch(FM(\alpha)))\\
=&z^{-\mu_+}z^{\rho_+}(\Gamma_{X_+}\cup (2\pi\sqrt{-1})^{\frac{\text{deg}_{X_+}}{2}}(f_+)_*(ch(f_-^*\alpha)Td(T_{f_+}))).
\end{split}
\end{equation}
The action of the operators $z^{-\mu_\pm}$ is given by
\begin{equation}
z^{-\mu_\pm}(c)=z^{\frac{1}{2}\text{dim} X_\pm}\cdot z^{-\frac{1}{2}\text{deg}_{X_\pm}(c)}\cdot c.    
\end{equation} 
The action of the operator $z^{-\frac{1}{2}\text{deg}_{X_\pm}(c)}$ introduces non-positive powers of $z$. The operators $z^{\rho_\pm}$ are defined to be multiplications by $\sum_{k\geq 0}\frac{(\log z)^k\rho_\pm^k}{k!}$. After removing the common factor $z^{\frac{1}{2}\text{dim} X_\pm}$ from both sides of (\ref{eqn:FM_compatible2}), we see that comparing the terms of highest order in $z$ yields $\bU_H ch_0(\alpha)=(f_+)_*f_-^*(ch_0(\alpha))$. 

The term $\bU_H ch_1(\alpha)$ is the unique term on the left-hand side of (\ref{eqn:FM_compatible2}) proportional to $(2\pi\sqrt{-1})^1\cdot z^{-1}$. On the right-hand side of (\ref{eqn:FM_compatible2}), the term proportional to $(2\pi\sqrt{-1})^1\cdot z^{-1}$ is $(f_+)_*f_-^*(ch_1(\alpha))$. Therefore we obtain $\bU_H ch_1(\alpha)=(f_+)_*f_-^*(ch_1(\alpha))$. Comparing terms proportional to $(2\pi\sqrt{-1})^k\cdot z^{-k}$, we get
\begin{equation}
\bU_H ch_k(\alpha)=(f_+)_*f_-^*(ch_k(\alpha)), \quad k\geq 0.
\end{equation}
This shows that $\bU$ induces 
\begin{equation}\label{eqn:bU_H_ans}
\bU_H=(f_+)_*f_-^*
\end{equation} 
on equivariant cohomology of $X_\pm$ (and $X_\pm\oplus \cO$). 

By (\ref{eqn:U_and_barU}), the induced map $\bar{\bU}_H$ in (\ref{eqn:bbU_H}) satisfies
\begin{equation}\label{eqn:UbU}
\kappa_+\circ \bU_H=\bar{\bU}_H\circ \kappa_-.    
\end{equation}
The diagram (\ref{diagram:Gr_flop_local}) give the following two cartesian squares,
\begin{equation*}
\xymatrix{
(X_-\oplus\cO)^o\ar[d]_{\iota_-} & (\tilde{X}\oplus\cO)^o\ar[d]^{\tilde{\iota}}\ar[l]_{f_-}\ar[r]^{f_+} &(X_+\oplus\cO)^o\ar[d]^{\iota_+}\\
X_-\oplus\cO & \tilde{X}\oplus \cO\ar[l]^{f_-}\ar[r]_{f_+} &X_+\oplus \cO,
}    
\end{equation*}
where $(-)^o$ denotes the complement of the zero section. We have 
\begin{equation}\label{eqn:Kirwan_compatible}
\tilde{\iota}^*f_-^*=f_-^*\iota_-^*, \quad \iota_+^*(f_+)_*=(f_+)_*\tilde{\iota}^*.    
\end{equation}
Since the Kirwan maps $\kappa_\pm$ are given by $\iota_\pm^*$ in equivariant cohomology, (\ref{eqn:bU_H_ans}), (\ref{eqn:UbU}), and (\ref{eqn:Kirwan_compatible}) imply
\begin{equation}\label{eqn:FM_coh}
\bar{\bU}_H=(\bar{f}_+)_*\bar{f}_-^*: H^*(\bar{X}_-)\to H^*(\bar{X}_+).
\end{equation}

It follows from the whole discussion above that (\ref{eqn:FM_coh}) is an isomorphism of quantum cohomology rings.

\subsection{Higher genus}\label{sec:higher_genus}
By e.g. \cite{BCFK}, the genus $0$ Gromov-Witten theory of Grassmannian is semisimple. By \cite[Corollary 1.8]{IK}, genus $0$ Gromov-Witten theories of $\bar{X}_\pm=\mathbb{P}(X_\pm\oplus \cO)$ is semisimple. Then we may apply the arguments in \cite{ILLW} to show that (\ref{eqn:FM_coh}) induces equalities on higher genus ancestor potentials of $\bar{X}_\pm$. More precisely, the discussions in Section \ref{sec:genus0} show that (\ref{eqn:FM_coh}) induces an isomorphism of genus $0$ Gromov-Witten theories of $\bar{X}_\pm$ at the level of Frobenius manifolds. Also, $\bar{\bU}$ identifies the $J$-functions of $\bar{X}_\pm$. Then, we can use Givental-Teleman reconstruction in the manner of \cite{ILLW} to identify higher genus theories.

With additional care (especially on the analytical continuation), it should be possible to show that the {\em descendant} Gromov-Witten theory of $\bar{X}_\pm$ are equated by applying the quantization of $\bar{\bU}$, by following the approach of \cite{CI}.

\end{document}